\documentclass{article}
\usepackage{amsthm}
\usepackage{amssymb}
\usepackage{amsmath}

\theoremstyle{plain}
\newtheorem{thm}{Theorem}[section]
\newtheorem{lemma}[thm]{Lemma}

\theoremstyle{definition}
\newtheorem{definition}[thm]{Definition}

\begin{document}
\title{The strong Gelfand pairs of the Suzuki family of groups}
\author{Joseph E. Marrow}
%\address{Mathematics Department\\
%Southern Utah University\\UT 84720\\USA}
%\urladdr{http://www.cs.uwinnebago.edu/homepages/~menuhin}
%\email{josephmarrow@suu.edu}

%\subjclass{Find classification codes}

%\keywords{Suzuki groups, Ree groups, strong Gelfand pairs, Gelfand pairs, Schur rings}

\maketitle

\begin{abstract}
We find every subgroup $H\leq Sz(q)$ so that the pair $(Sz(q), H)$ is a strong Gelfand pair.
\end{abstract}
\section{Introduction}
\begin{definition}
A finite group $G$ and a subgroup $H$, form a \emph{Gelfand pair} $(G, H)$ if the trivial character of $H$ induces a multiplicity-free character of $G$.
\end{definition}

There are a number of equivalent conditions to two groups being a Gelfand pair. These include, but are not limited to
\begin{itemize}
\item The centralizer algebra of the permutation representation is commutative;
\item The algebra of $(H, H)$-double invariant functions of $G$ with multipliciation defined by convolution is commutative;
\item The double cosets $HgH$ forms a commutative Schur ring over $G$.
\end{itemize}
with these conditions and others as found in \cite{grady, totalArg, markov}.

This requirement can be strengthened to what is known as a strong Gelfand pair, and this is what we will be considering. Strong Gelfand pairs also have a number of equivalent conditions. Of particular note is the relationship to Schur rings, fundamental objects in algebraic combinatorics \cite{konig, srings, schur}; $(G, H)$ being a strong Gelfand pair  means that the Schur ring determined by the $H$-classes $g^H$ is a commutative ring \cite{symplectic}.

While the determination of the strong Gelfand pairs of various families of finite groups is an ongoing process, a number of strides have been made. When $G\in \{S_n$, $D_{2n}$, $Dic_{4n}$, $\mathrm{SL}_2(p^n)$, $\mathrm{Sp}_4(2^e)\}$ the strong Gelfand pairs $(G, H)$ are known \cite{grady, totalArg, symplectic, thesis}. The strong Gelfand pairs for other families of groups have also been studied \cite{wreath}. 

While \cite{grady} introduces the notion of an extra strong Gelfand pair, we do not consider this.

The Suzuki family of simple groups $Sz(2^{2n+1})$, described in \cite{suzuki}, has a number of useful constructions \cite{ree, tits, wilson}. This is an infinite family of groups of Lie type. They are maximal subgroups of the finite symplectic groups \cite{ono, MaxSub} and it has already been shown that $(\mathrm{Sp}_4(2^{2n+1}), Sz(2^{2n+1}))$ is not a strong Gelfand pair \cite{symplectic, thesis}. We now turn our attention to determining if the groups $Sz(q)$ contain any proper strong Gelfand subgroups.

\section{Preliminaries}
\begin{definition}
A \emph{strong Gelfand pair} $(G, H)$ is a pair of finite groups $H\leq G$ such that $\langle \chi\downarrow H, \psi\rangle\leq 1$ for all irreducible complex characters $\chi\in\hat{G}$, $\psi\in\hat{H}$. If $G, H)$ is a strong Gelfand pair, we call $H$ a \emph{strong Gelfand subgroup} of $G$. 
\end{definition}

In particular, $(G, G)$ is always a strong Gelfand pair, we consider this one trivial. We take a critical result from \cite{totalArg}, given here as Lemma \ref{stack}.
\begin{lemma}\label{stack}
Let $K \leq H \leq G$ be finite groups. If $(G, H)$ is not a strong Gelfand pair, then $(G, K)$ is not a strong Gelfand pair.
\end{lemma}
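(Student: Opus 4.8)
The plan is to prove the contrapositive in the natural way: assume $(G,H)$ is not a strong Gelfand pair and produce, via restriction through the intermediate group $K$, a witness that $(G,K)$ is not a strong Gelfand pair. First I would unpack the hypothesis: there exist $\chi\in\hat G$ and $\psi\in\hat H$ with $\langle\chi\!\downarrow\! H,\psi\rangle\geq 2$. The key observation is that restriction is transitive, $\chi\!\downarrow\! K=(\chi\!\downarrow\! H)\!\downarrow\! K$, so it suffices to find some $\eta\in\hat K$ appearing in $\psi\!\downarrow\! K$ with multiplicity at least $1$ and then track how many times it shows up in $\chi\!\downarrow\! K$.

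The cleanest route uses Frobenius reciprocity together with positivity of multiplicities. Write $\chi\!\downarrow\! H=2\psi+\theta$ where $\theta$ is a (possibly zero) character of $H$, i.e. a nonnegative integer combination of irreducibles. Pick any irreducible constituent $\eta$ of $\psi\!\downarrow\! K$; such $\eta$ exists since $\psi\!\downarrow\! K$ is a nonzero character. Then
\begin{align*}
\langle\chi\!\downarrow\! K,\eta\rangle_K
&=\langle(\chi\!\downarrow\! H)\!\downarrow\! K,\eta\rangle_K\\
&=2\langle\psi\!\downarrow\! K,\eta\rangle_K+\langle\theta\!\downarrow\! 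K,\eta\rangle_K\\
&\geq 2\langle\psi\!\downarrow\! K,\eta\rangle_K\geq 2,
\end{align*}
where the last inequalities use that $\langle\theta\!\downarrow\! K,\eta\rangle_K\geq 0$ because $\theta\!\downarrow\! K$ is a genuine character, and that $\eta$ was chosen to be a constituent of $\psi\!\downarrow\! K$. Hence $\langle\chi\!\downarrow\! K,\eta\rangle_K\geq 2$, so $(G,K)$ fails the defining inequality of a strong Gelfand pair.

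There is essentially no serious obstacle here; the lemma is a soft consequence of transitivity of induction/restriction and the nonnegativity of character inner products. The only point requiring a word of care is the very first step — that $\psi\!\downarrow\! K$ is a nonzero character and therefore has at least one irreducible constituent — which is immediate since restriction of a nonzero character to a subgroup is again a nonzero character (its value at the identity is $\psi(1)>0$). One could alternatively phrase the whole argument with Frobenius reciprocity, replacing $\langle\chi\!\downarrow\! H,\psi\rangle$ by $\langle\chi,\psi\!\uparrow\! G\rangle$ and using transitivity of induction, but the restriction formulation above is the most transparent. This lemma is exactly the tool that lets the rest of the paper reduce the search for strong Gelfand subgroups of $Sz(q)$ to checking maximal subgroups first.
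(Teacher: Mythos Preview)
Your argument is correct. The paper itself does not supply a proof of this lemma; it simply imports the result from \cite{totalArg}. Your restriction-through-$H$ argument---writing $\chi\!\downarrow\! H=2\psi+\theta$ with $\theta$ a genuine character, picking any irreducible constituent $\eta$ of $\psi\!\downarrow\! K$, and concluding $\langle\chi\!\downarrow\! K,\eta\rangle\geq 2$ by nonnegativity---is the standard way to establish this fact, and nothing in it is problematic.
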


Using Lemma \ref{stack}, we will be able to determine the strong Gelfand pairs $(Sz(q), H)$ of the Suzuki family of groups by initially focusing on the maximal subgroups $H\leq Sz(q)$. In order to show there are no nontrivial strong Gelfand pairs of $Sz(q)$, $q=2^{2n+1}$, $n\geq1$, we will need to know three things. Firstly, that the maximal subgroups of $Sz(q)$ are as follows, taken from \cite{MaxSub}. Let $q=2^e$, $e>1$, $e$ odd. Note $Sz(2)\cong F_{20}\cong 5\colon 4$. 
\begin{table}[h!]\label{max}
\centering
\begin{tabular}{cc}
Group & Condition\\
\hline
$E_q^{1+1}\colon \mathcal{C}_{q-1}$ & \\
$\mathrm{D}_{2(q-1)}$ & \\
$(q-\sqrt{2q}+1)\colon 4$ & \\
$(q+\sqrt{2q}+1)\colon 4$ & \\
$Sz(q_0)$ & $q=q_0^r$, $r$ a prime, $q_0\neq2$.
\end{tabular}
\end{table}

Second, the degree of a character of $Sz(q)$, found in Lemma \ref{degree}, taken from \cite{suzuki}
\begin{lemma}\label{degree}
The Suzuki group has a character of degree $q^2+1$. %(q^2+2qr-2r-1), r=2^n, q=2^2n+1
\end{lemma}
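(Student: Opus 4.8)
The plan is to produce a character of degree $q^2+1$ by parabolic (Harish--Chandra) induction from the Borel subgroup and to prove its irreducibility with Mackey's criterion, exploiting that $Sz(q)$ is a rank-one group of Lie type. Write $G=Sz(q)$ and let $B=E_q^{1+1}\colon\mathcal{C}_{q-1}$ be the Borel, with unipotent radical $Q=E_q^{1+1}$ of order $q^2$ and cyclic complement $C=\mathcal{C}_{q-1}$ of order $q-1$; since $|G|=q^2(q^2+1)(q-1)$ we have $[G:B]=q^2+1$. Fix a nontrivial linear character $\lambda$ of $C$ and inflate it along $B\twoheadrightarrow B/Q\cong C$ to a linear character of $B$, still denoted $\lambda$. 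Then $\chi:=\lambda\uparrow_B^G$ has degree $[G:B]\cdot\deg\lambda=q^2+1$, so everything reduces to showing $\chi$ is irreducible.

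For irreducibility I would invoke Mackey's criterion. Since $Sz(q)$ has a $BN$-pair of rank one, the Bruhat decomposition yields exactly two $(B,B)$-double cosets, $G=B\sqcup BwB$, where $w$ represents the nontrivial element of the Weyl group (of order $2$); moreover $B\cap{}^{w}B=C$, and conjugation by $w$ acts on $C$ by inversion. Mackey's formula then gives
\[
\langle\chi,\chi\rangle_G=\langle\lambda,\lambda\rangle_B+\big\langle\lambda|_{C},\,({}^{w}\lambda)|_{C}\big\rangle_C=1+\langle\lambda,\lambda^{-1}\rangle_C .
\]
Because $q$ is even, $|C|=q-1$ is odd, so the trivial character is the only self-inverse character of $C$; as $\lambda$ is nontrivial, $\lambda\neq\lambda^{-1}$ and the second term vanishes. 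Hence $\langle\chi,\chi\rangle_G=1$ and $\chi$ is an irreducible character of $G$ of degree $q^2+1$.

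The only step that is not purely formal is the description of $B$ and its twisted conjugate inside $Sz(q)$: that there are just two $(B,B)$-double cosets and that $B\cap{}^{w}B$ is exactly the torus $C$, on which $w$ inverts. This is where the work lies; it can be extracted from the $BN$-pair axioms for the Suzuki groups or verified directly in Suzuki's four-dimensional matrix model. Alternatively, one may simply quote Suzuki's explicit character table \cite{suzuki}, in which the $(q-2)/2$ characters of degree $q^2+1$ appear outright; for the purposes of this paper only the existence of one such character is needed, and either route supplies it.
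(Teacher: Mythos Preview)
Your argument is correct: inducing a nontrivial linear character of the torus (inflated through the Borel) and checking irreducibility via Mackey, using the rank-one Bruhat decomposition and the fact that the Weyl involution inverts the cyclic torus of odd order $q-1$, does produce an irreducible character of degree $q^2+1$. The paper, however, does not prove this lemma at all; it simply imports the statement from Suzuki's original paper \cite{suzuki}, where the full character table is computed. So your route is genuinely different in that it is a self-contained construction rather than a citation. What your approach buys is an explanation of \emph{why} such a character exists (it is a principal-series representation of a rank-one group), and it isolates exactly the structural input needed (the $BN$-pair data and the Weyl action on the torus). What the paper's approach buys is brevity and certainty: quoting the explicit character table avoids having to verify the facts about $B\cap{}^{w}B$ and the $w$-action that you flag as the nontrivial step. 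You yourself note this alternative at the end, which is precisely what the paper does.
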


Lastly, we will use the result from \cite{totalArg} that shows a condition under which $(G, H)$ cannot form a strong Gelfand pair. 
\begin{definition}
The \emph{total character} of a group $H$ is the sum of the irreducible characters of $H$. 
\end{definition}

\begin{lemma}\label{total}
Let $H\leq G$ and $\tau$ be the total character of $H$ and $\chi$ an irreducible character of $G$. If $\deg(\tau) < \deg(\chi)$, then $(G, H)$ is not a strong Gelfand pair.
\end{lemma}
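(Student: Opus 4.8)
The plan is to prove the contrapositive: assuming $(G,H)$ is a strong Gelfand pair, I will show that $\deg(\chi)\leq\deg(\tau)$ for the given irreducible $\chi\in\hat G$. The whole argument rests on two elementary facts about characters — that the degree (the value at the identity) is additive over sums of characters, and that it is unchanged by restriction to a subgroup.

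First I would expand the restricted character in the basis of irreducible characters of $H$, writing $\chi\downarrow H=\sum_{\psi\in\hat H}m_\psi\psi$ where $m_\psi=\langle\chi\downarrow H,\psi\rangle\in\mathbb{Z}_{\geq0}$ and the sum is finite since $H$ is finite. The strong Gelfand hypothesis says exactly that $m_\psi\leq1$ for every $\psi$, so in fact $\chi\downarrow H=\sum_{\psi\in S}\psi$ for some subset $S\subseteq\hat H$. Consequently $\tau-\chi\downarrow H=\sum_{\psi\notin S}\psi$ is again a character of $H$ (or the zero class function if $S=\hat H$); that is, $\chi\downarrow H$ is a ``sub-character'' of the total character $\tau$.

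Evaluating at the identity then finishes the argument: additivity of degree together with the fact that $\tau-\chi\downarrow H$ is a character gives $\deg(\chi\downarrow H)\leq\deg(\tau)$, while restriction does not change the value at $1$, so $\deg(\chi\downarrow H)=\chi(1)=\deg(\chi)$. Hence $\deg(\chi)\leq\deg(\tau)$, which is the contrapositive of the claim. I do not expect a genuine obstacle here; the only point requiring care is the logical direction of the argument and the observation that a multiplicity-free restriction forces $\chi\downarrow H$ to be an honest sub-sum of $\tau$ (not merely coefficientwise comparable in some weaker sense), which is automatic because the multiplicities are integers bounded by $1$.
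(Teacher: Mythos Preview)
Your argument is correct and is essentially the standard proof of this fact. Note, however, that the paper does not actually supply its own proof of this lemma: it is stated there as a result taken from \cite{totalArg}, so there is nothing in the paper to compare your approach against. Your contrapositive argument---that a multiplicity-free restriction $\chi\downarrow H$ is a sub-sum of $\tau$, hence $\deg(\chi)\le\deg(\tau)$---is exactly the expected reasoning and would serve perfectly well as an in-paper proof.
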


\section{Main Result}

\subsection{$E_q^{1+1}\colon \mathcal{C}_{q-1}$}
\begin{lemma}\label{e}
Let $q>2$ be an odd power of $2$. The pair $(Sz(q), E_q^{1+1}\colon \mathcal{C}_{q-1})$ is not a strong Gelfand pair.
\end{lemma}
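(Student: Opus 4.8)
The plan is to apply Lemma~\ref{total} directly to $H = E_q^{1+1}\colon\mathcal{C}_{q-1}$. By Lemma~\ref{degree}, $Sz(q)$ has an irreducible character of degree $q^2+1$, so it suffices to prove that the total character $\tau$ of $H$ satisfies $\deg\tau < q^2+1$. Write $q = 2^{2n+1}$ with $n\geq 1$ and set $Q := E_q^{1+1}$, the Sylow $2$-subgroup of $Sz(q)$: a special $2$-group of order $q^2$ with $Z(Q) = Q' = \Phi(Q)$ elementary abelian of order $q$ and $Q/Z(Q)$ elementary abelian of order $q$. The group $H$ is the Borel subgroup of $Sz(q)$, and it is a Frobenius group with kernel $Q$ and complement $T := \mathcal{C}_{q-1}$; in particular $T$ acts fixed-point-freely on $Q$. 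It is classical that $Q$ has exactly $q$ linear characters and $2(q-1)$ irreducible characters of degree $2^n=\sqrt{q/2}$, consistent with $q + 2(q-1)\cdot\tfrac{q}{2}=q^2$.

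The heart of the argument is to read off the irreducible character degrees of $H$ by Clifford theory over the normal subgroup $Q$. Because $T$ acts fixed-point-freely on $Q$, it acts freely on $Z(Q)\setminus\{1\}$ and on $(Q/Z(Q))\setminus\{1\}$, and dually on the nontrivial characters of each; since every nonlinear irreducible character of $Q$ is determined on $Z(Q)$ by a nontrivial character of $Z(Q)$, it follows that $T$ acts freely on the $q-1$ nontrivial linear characters of $Q$ (one orbit) and on the $2(q-1)$ nonlinear characters of $Q$ (two orbits). In particular $[Q,T]Z(Q)=Q$, so $H'=Q$ and $H$ has exactly $q-1$ linear characters; inducing a nontrivial linear character of $Q$ to $H$ gives one irreducible of degree $q-1$; and inducing a representative of each of the two $T$-orbits of nonlinear characters of $Q$ gives exactly two irreducibles of degree $(q-1)\cdot 2^n$ (each such induction being irreducible by Mackey, since the inertia group is $Q$). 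As a check, $(q-1)\cdot 1 + (q-1)^2 + 2(q-1)^2 2^{2n} = (q-1)q^2 = |H|$. Hence
\[
\deg\tau = (q-1) + (q-1) + 2(q-1)2^{n} = 2(q-1)\bigl(1 + 2^{n}\bigr).
\]

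It remains to verify $2(q-1)(1+2^{n}) < q^2+1$. Expanding with $q=2^{2n+1}$, the left-hand side equals $2^{3n+2} + 2^{2n+2} - 2^{n+1} - 2$, which is less than $2^{3n+3}\leq 2^{4n+2} = q^2$ for every $n\geq 1$; for instance, when $q=8$ the total character has degree $42 < 65$. Therefore $\deg\tau < q^2 + 1 \leq \deg\chi$ for the character $\chi$ furnished by Lemma~\ref{degree}, and Lemma~\ref{total} yields that $(Sz(q),\, E_q^{1+1}\colon\mathcal{C}_{q-1})$ is not a strong Gelfand pair.

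I expect the main obstacle to be the middle step: correctly pinning down the irreducible character degrees of the Borel subgroup $H$, which requires both the internal structure of the special $2$-group $E_q^{1+1}$ and a precise account of how the complement $\mathcal{C}_{q-1}$ permutes the characters of $Q$. (The same freeness can be packaged arithmetically: the torus acts on $Z(Q)\cong\mathbb{F}_q$ through $t\mapsto t^{1+2^{n+1}}$, and $\gcd(2^{n+1}+1,\,2^{2n+1}-1)=1$ since, with $a=2^n$, one has $2a^2-1\equiv -(a+1)\pmod{2a+1}$ and $2a+1\equiv -1\pmod{a+1}$.) It is worth noting that a cruder estimate---using only $\sum_{\psi}\psi(1)^2=|H|$ together with the fact that the minimal nonlinear degree is $q-1$---gives merely $\deg\tau\leq q^2+q-2$, which does \emph{not} beat $q^2+1$, so the sharper count is genuinely needed. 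Finally, Lemma~\ref{stack} is not invoked in this lemma, since $H$ is treated directly; it will be used afterwards to transfer non-Gelfandness from the maximal subgroups to all proper subgroups of $Sz(q)$.
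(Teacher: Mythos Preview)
Your proof is correct and follows the same overall approach as the paper: compute the degree of the total character of the Borel subgroup $H$ and compare it with the irreducible degree $q^2+1$ from Lemma~\ref{degree} via Lemma~\ref{total}, obtaining exactly $\deg\tau = 2(q-1)(1+2^n)$. The only difference is that the paper quotes the character degrees of $H$ from Nouacer~\cite{french}, whereas you derive them yourself via Clifford theory using the Frobenius structure $H=Q\rtimes T$ and the known character degrees of the Suzuki $2$-group $Q$; your version is thus more self-contained, at the cost of having to justify the orbit structure of $T$ on $\mathrm{Irr}(Q)$, which you do correctly.
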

\begin{proof}
Let $q=2^{2n+1}$. Let $H=S\colon M$, where $S=E_q^{1+1}$ and $M=\mathcal{C}_{q-1}$. We provide the following excerpt from \cite{french}, translated by the author. 

\textit{The subgroup $H$ has $q+2$ conjugacy classes, namely the $q-1$ classes containing the elements of $M$, the class of involutions, and the two classes of order $4$ elements.\\
So $H$ has $3$ nonlinear characters (of degree at least two) which we will notate $\mu$, $\partial_1$, $\partial_2$.\\
-- $\mu$ is the doubly transitive character of degree $q-1$ obtained from the doubly transitive representation of $H/Z(S)$.\\
-- $H$ has two real conjugate classes, the class of the identity element and the class of involutions.\\
So $H$ has only $2$ real characters: $1_H$ and $\mu$. Consequently, $\partial_1=\overline{\partial_2}$. Furthermore, we have:
$$
|H| = q^2(q-1) = (q-1)+(q-1)^2 + 2(\partial_1(1))^2,
$$
hence $\partial_1(1)=(q-1)\left(\frac{q}{2}\right)^\frac{1}{2}$.
}

We will write $\sqrt{\frac{q}{2}} = 2^n$. Then we have $2(q-1)+2^n(q-1)$ as the degree of the total character of $E_q^{1+1}\colon \mathcal{C}_{q-1}$. The result then follows from Lemma \ref{degree} and Lemma \ref{total}.
\end{proof}

\subsection{$\mathrm{D}_{2(q-1)}$ and $(q\pm\sqrt{2q}+1)\colon 4$}
\begin{lemma}\label{most}
Let $q>2$ be an odd power of $2$. None of $(Sz(q), \mathrm{D}_{2(q-1)})$, $(Sz(q), (q+\sqrt{2q}+1)\colon 4)$, or $(Sz(q), (q-\sqrt{2q}+1)\colon 4)$ are strong Gelfand pairs.
\end{lemma}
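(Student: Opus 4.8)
The plan is to re-run the argument of Lemma \ref{e}: for each of the three subgroups $H$ we compute (or merely bound) the degree of its total character $\tau$, check that this degree is strictly smaller than $q^2+1$, and then conclude by Lemma \ref{degree} together with Lemma \ref{total}. For $H=\mathrm{D}_{2(q-1)}$ this is immediate from the character table of a dihedral group: since $q$ is even, $q-1$ is odd, so $\mathrm{D}_{2(q-1)}$ has exactly two linear characters and $\tfrac{q-2}{2}$ irreducible characters of degree $2$, and hence $\deg(\tau)=2+2\cdot\tfrac{q-2}{2}=q$.

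For $H=(q\pm\sqrt{2q}+1)\colon 4$ I would first recall from the structure of $Sz(q)$ (see \cite{suzuki, MaxSub}) that these are the normalizers of the cyclic maximal tori, and that they are Frobenius groups with cyclic kernel $C_m$, where $m=q\pm\sqrt{2q}+1$, and complement $C_4$ acting on $C_m$ without nontrivial fixed points. Writing $q=2^{2n+1}$ and $\sqrt{2q}=2^{n+1}$ (here $n\geq 1$ because $q>2$), one has $m-1=2^{n+1}(2^{n}\pm1)$, which is divisible by $4$; so the fixed-point-free $C_4$-action breaks the $m-1$ nontrivial characters of $C_m$ into $\tfrac{m-1}{4}$ orbits of size $4$, and $H$ accordingly has four linear characters together with $\tfrac{m-1}{4}$ irreducible characters of degree $4$. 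Therefore $\deg(\tau)=4+(m-1)=q\pm\sqrt{2q}+4$.

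It then remains only to compare degrees. Since $q=2^{2n+1}\geq 8$ we have $\sqrt{2q}=2^{n+1}<q$, so in every case $\deg(\tau)\leq q+\sqrt{2q}+4<2q+4<q^2+1$, and Lemma \ref{degree} and Lemma \ref{total} finish the proof. If one prefers not to identify the exact character degrees, the crude inequality $\deg(\tau)=\sum_{\chi}\chi(1)\leq\sum_{\chi}\chi(1)^2=|H|$ combined with $|H|<q^2+1$ for each of the three subgroups already closes the argument.

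The only genuinely nonroutine ingredient is the structural input behind the second case, namely that $C_4$ acts fixed-point-freely on the cyclic torus, so that these torus normalizers are Frobenius with the character degrees stated above; I would cite this from \cite{suzuki} or \cite{MaxSub} rather than reprove it. Everything else reduces to elementary estimates with powers of $2$, including the verification that the relevant divisibility ($4\mid m-1$) holds precisely because $q>2$.
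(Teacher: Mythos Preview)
Your proof is correct and follows the same strategy as the paper: bound $\deg(\tau)$ above, compare with the degree $q^2+1$ from Lemma~\ref{degree}, and invoke Lemma~\ref{total}. The paper's proof is shorter because it uses exactly the crude inequality you mention at the end---$\deg(\tau)\leq |H|$---and simply observes that each of $2(q-1)$, $4(q+\sqrt{2q}+1)$, $4(q-\sqrt{2q}+1)$ is less than $q^2+1$; your explicit computation of $\deg(\tau)$ via the dihedral and Frobenius character tables is accurate but more than is needed here.
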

\begin{proof}
The orders of the subgroups in question are $2(q-1)$, $4(q+\sqrt{2q}+1)$ and $4(q-\sqrt{2q}+1)$. Hence we are done by Lemma \ref{degree} and \ref{total}.
\end{proof}

\subsection{$Sz(q_0)$}
\begin{lemma}\label{sz}
Let $q=2^{2n+1} = q_0^r$ for $r$ a prime and $q_0\neq 2$. The pair $(Sz(q), Sz(q_0))$ is not a strong Gelfand pair.
\end{lemma}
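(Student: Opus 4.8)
The plan is to apply Lemma~\ref{total} exactly as in the previous two subsections: I will compute, or at least bound from above, the degree of the total character $\tau$ of $Sz(q_0)$ and show that it is strictly smaller than $q^2+1$, the degree furnished by Lemma~\ref{degree}. Since $Sz(q)$ has an irreducible character of that degree, Lemma~\ref{total} then immediately yields that $(Sz(q),Sz(q_0))$ is not a strong Gelfand pair.

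First I would record the ordinary character degrees of $Sz(q_0)$ from \cite{suzuki}. Writing $r_0=\sqrt{2q_0}$, the group $Sz(q_0)$ has $q_0+3$ irreducible characters: the trivial character; the Steinberg character of degree $q_0^2$; $\tfrac{q_0-2}{2}$ characters of degree $q_0^2+1$; $\tfrac{q_0+r_0}{4}$ characters of degree $(q_0-1)(q_0-r_0+1)$; $\tfrac{q_0-r_0}{4}$ characters of degree $(q_0-1)(q_0+r_0+1)$; and two characters of degree $\tfrac{r_0(q_0-1)}{2}$. Summing these with their multiplicities, the terms involving $r_0$ in the two middle families cancel and one obtains the closed form
$$\deg(\tau)=q_0^3-q_0^2+q_0+r_0(q_0-1),$$
which for $q_0=8$ gives $484$ as a sanity check. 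In particular $\deg(\tau)<q_0^3+\sqrt{2q_0}\cdot q_0<2q_0^3$.

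It then remains to compare $2q_0^3$ with $q^2+1=q_0^{2r}+1$, and here I would use the hypotheses carefully. Since $q=2^{2n+1}$ is an \emph{odd} power of $2$, writing $q_0=2^k$ we need $kr=2n+1$ odd, which forces both $k$ and $r$ to be odd; as $r$ is prime this gives $r\geq 3$, and as $q_0\neq 2$ we have $k\geq 3$, i.e. $q_0\geq 8$. Hence $q^2+1=q_0^{2r}+1\geq q_0^6+1>2q_0^3>\deg(\tau)$, and Lemma~\ref{total} finishes the argument.

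The only real work is transcribing the character degrees and multiplicities of $Sz(q_0)$ correctly — the factors of $\sqrt{2q_0}$ and the fractional multiplicities are easy to get wrong — but the final inequality has so much slack (cubic versus sextic in $q_0$) that even a crude bound such as ``each of the $q_0+3$ degrees is at most $q_0^2+1$, so $\deg(\tau)\le(q_0+3)(q_0^2+1)$'' would suffice, so there is no genuine obstacle. The one point not to skip is the elementary parity observation that $r$ must be odd, hence $r\geq 3$; without it one would also have to treat $r=2$, though $2q_0^3<q_0^4+1$ for all $q_0\ge 2$ disposes of that case as well.
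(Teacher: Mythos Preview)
Your proof is correct and follows the same route as the paper: compute the degree of the total character of $Sz(q_0)$ from the character table in \cite{suzuki}, observe it is dominated by $q_0^{2r}+1$, and invoke Lemmas~\ref{degree} and~\ref{total}. The only cosmetic difference is that the paper writes the total degree as $2^{k+1}(q_0-1)-q_0(q_0-1)+q_0^3$ (your $q_0^3-q_0^2+q_0+r_0(q_0-1)$ rewritten) and compares it directly to $q_0^4+1$ using only $r\geq 2$, whereas you pass through the intermediate bound $2q_0^3$ and invoke the parity observation $r\geq 3$; as you yourself note, the inequality has enough slack that either comparison works.
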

\begin{proof}
For ease let $q_0=2^{2k+1}$. The degree of the total character of $Sz(q_0)$ is $2^{k+1}(q_0-1)-q_0(q_0-1)+q_0^3$, as seen in \cite{suzuki}. This shows that
$$
q^2+1 = q_0^{2r}+1 \geq q_0^4+1 \geq 2^{k+1}(q_0-1)-q_0(q_0-1)+q_0^3.
$$
This gives the result by Lemma \ref{degree} and Lemma \ref{total}.
\end{proof}

\begin{thm}\label{main}
For $q>2$, the group $Sz(q)$ has no nontrivial strong Gelfand pairs.
\end{thm}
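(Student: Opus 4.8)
The plan is to assemble Lemmas~\ref{e}, \ref{most}, and \ref{sz} using the descent principle of Lemma~\ref{stack}. First I would take an arbitrary proper subgroup $H < Sz(q)$; since $Sz(q)$ is finite, $H$ is contained in some maximal subgroup $M \leq Sz(q)$. Then I would appeal to the classification of maximal subgroups of $Sz(q)$ from \cite{MaxSub} (recalled above): $M$ is isomorphic to one of $E_q^{1+1}\colon \mathcal{C}_{q-1}$, $\mathrm{D}_{2(q-1)}$, $(q-\sqrt{2q}+1)\colon 4$, $(q+\sqrt{2q}+1)\colon 4$, or a subfield subgroup $Sz(q_0)$ with $q = q_0^r$ for a prime $r$ and $q_0 \neq 2$.

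Next I would dispatch each of the five possibilities. If $M \cong E_q^{1+1}\colon \mathcal{C}_{q-1}$, Lemma~\ref{e} shows $(Sz(q), M)$ is not a strong Gelfand pair; if $M$ is one of the three subgroups $\mathrm{D}_{2(q-1)}$, $(q+\sqrt{2q}+1)\colon 4$, $(q-\sqrt{2q}+1)\colon 4$, this is Lemma~\ref{most}; and if $M \cong Sz(q_0)$ it is Lemma~\ref{sz}. In every case $(Sz(q), M)$ fails to be a strong Gelfand pair, so Lemma~\ref{stack} applied to the chain $H \leq M \leq Sz(q)$ yields that $(Sz(q), H)$ is not a strong Gelfand pair. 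As $H$ ranges over all proper subgroups, this leaves only the trivial pair $(Sz(q), Sz(q))$, which is exactly the assertion of Theorem~\ref{main}.

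The one point that needs a word of care is the subfield row: when $q = 2^e$ with $e$ an odd prime there is no maximal subgroup of the form $Sz(q_0)$ with $q_0 \neq 2$ (the only subfield would give $Sz(2) \cong F_{20}$, which is excluded from the table), so only the first four families occur; when $e$ is composite the subfield subgroups are present and Lemma~\ref{sz} covers them. Beyond this, there is no genuine obstacle: all of the representation-theoretic substance — comparing the degree $q^2+1$ supplied by Lemma~\ref{degree} against the total-character degrees of the candidate subgroups via Lemma~\ref{total} — has already been discharged in Lemmas~\ref{e}, \ref{most}, and \ref{sz}, so the proof of Theorem~\ref{main} amounts to a short bookkeeping argument over the rows of the maximal-subgroup table.
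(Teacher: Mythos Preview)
Your proposal is correct and follows exactly the paper's own argument: cite Lemmas~\ref{e}, \ref{most}, and \ref{sz} to rule out every maximal subgroup, then apply Lemma~\ref{stack} to conclude that no proper subgroup can be a strong Gelfand subgroup. Your extra remark about the subfield row when $e$ is prime is a harmless elaboration, not a departure from the paper's route.
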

\begin{proof}
We see from Lemmas \ref{e}, \ref{most}, and \ref{sz} that no maximal subgroup is a strong Gelfand subgroup. By Lemma \ref{stack}, since no maximal subgroups are strong Gelfand subgroups, no proper subgroup is a strong Gelfand subgroup.
\end{proof}

Lastly, for the discarded case, Theorem \ref{disc}.

\begin{thm}\label{disc}
The group $Sz(2)$ has four strong Gelfand subgroups; namely $Sz(2)$, $\mathrm{D}_{10}$, $\mathcal{C}_5$, and $\mathcal{C}_4$.
\end{thm}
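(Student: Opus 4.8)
The plan is to verify the claim for $Sz(2)\cong F_{20}$, a group of order $20$, by direct computation rather than by any asymptotic degree argument, since here the total-character bound of Lemma~\ref{total} is no longer decisive. First I would recall the subgroup lattice of $F_{20}\cong \mathcal{C}_5\colon\mathcal{C}_4$: up to conjugacy the proper nontrivial subgroups are the normal Sylow $5$-subgroup $\mathcal{C}_5$, the dihedral group $\mathrm{D}_{10}$ (the unique subgroup of order $10$, hence normal), the Sylow $2$-subgroups $\mathcal{C}_4$ (all conjugate), the subgroups $\mathcal{C}_2$ sitting inside each $\mathcal{C}_4$, and the trivial subgroup. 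Since the claim asserts exactly four strong Gelfand subgroups, including $Sz(2)$ itself, I must show $\mathcal{C}_5$, $\mathrm{D}_{10}$, and $\mathcal{C}_4$ all work, while $\mathcal{C}_2$ and the trivial subgroup do not; by Lemma~\ref{stack} the failure of $\mathcal{C}_2$ automatically kills the trivial subgroup, so there are really only five cases to check.

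Next I would record the character table of $F_{20}$: it has four linear characters (factoring through $F_{20}/\mathcal{C}_5\cong\mathcal{C}_4$) and a single character $\chi$ of degree $4$, since $4\cdot 1 + 4^2 = 20$. The strong Gelfand condition $(G,H)$ requires that for every $H$ the restriction of each irreducible of $G$ to $H$ is multiplicity-free; for the four linear characters of $G$ this is automatic, so the whole question reduces to analysing $\chi\downarrow H$ for $H\in\{\mathcal{C}_5,\mathrm{D}_{10},\mathcal{C}_4,\mathcal{C}_2\}$. For $H=\mathcal{C}_5$: $\chi\downarrow\mathcal{C}_5$ has degree $4$ and, since $\mathcal{C}_5$ is normal with $\chi$ faithful on it, it is the sum of the four nontrivial characters of $\mathcal{C}_5$, hence multiplicity-free. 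For $H=\mathrm{D}_{10}$: $\mathrm{D}_{10}$ has two linear and two degree-$2$ irreducibles, and $\chi\downarrow\mathrm{D}_{10}$ (degree $4$) is the sum of the two distinct degree-$2$ characters, again multiplicity-free. For $H=\mathcal{C}_4$: $\chi\downarrow\mathcal{C}_4$ has degree $4$ and I would compute its inner products with the four linear characters of $\mathcal{C}_4$ to see that each occurs exactly once — this follows from the values of $\chi$ on the classes of $F_{20}$ meeting $\mathcal{C}_4$. Finally for $H=\mathcal{C}_2$: here $\chi\downarrow\mathcal{C}_2$ has degree $4$ but $\mathcal{C}_2$ has only two irreducibles, so by pigeonhole some character appears with multiplicity at least $2$; hence $(Sz(2),\mathcal{C}_2)$ is not a strong Gelfand pair, and neither is $(Sz(2),1)$.

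Assembling these: the pairs $(Sz(2),Sz(2))$, $(Sz(2),\mathrm{D}_{10})$, $(Sz(2),\mathcal{C}_5)$, $(Sz(2),\mathcal{C}_4)$ are strong Gelfand pairs, and every other subgroup either is conjugate to one of these or is contained in $\mathcal{C}_4$ as a $\mathcal{C}_2$ (or is trivial) and therefore fails by the $\mathcal{C}_2$ computation together with Lemma~\ref{stack}. I expect the only mildly delicate point to be pinning down $\chi\downarrow\mathcal{C}_4$ precisely — one must be careful that the two order-$4$ elements of $F_{20}$ lie in distinct $G$-classes but fuse appropriately inside $\mathcal{C}_4$, so that the restricted character genuinely distributes one copy onto each of the four linear characters rather than doubling up on the two real ones. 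This is a finite check on the character table, so no real obstacle arises; the argument is essentially bookkeeping once the table and subgroup lattice are in hand.
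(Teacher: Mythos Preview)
Your argument is correct. The paper's own proof is a single sentence: ``This is easily verified electronically.'' You have instead carried out the verification by hand, which is a genuine (if modest) improvement in content. Your enumeration of the subgroup lattice of $F_{20}$ is accurate, your identification of the unique degree-$4$ irreducible $\chi$ is correct, and your restrictions are all right: $\chi\downarrow\mathcal{C}_5$ is the sum of the four nontrivial linear characters, $\chi\downarrow\mathrm{D}_{10}$ is the sum of the two degree-$2$ irreducibles, and $\chi\downarrow\mathcal{C}_4$ is the regular character. The last point, which you flag as ``mildly delicate'', is in fact immediate once you note that $\chi$ is induced from a nontrivial linear character of the normal $\mathcal{C}_5$, hence vanishes off $\mathcal{C}_5$; its restriction to any $\mathcal{C}_4$ then has character values $(4,0,0,0)$, i.e.\ the regular representation, with no need to track class fusion. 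Your pigeonhole argument for $\mathcal{C}_2$ and the appeal to Lemma~\ref{stack} for the trivial subgroup complete the case analysis cleanly.
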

\begin{proof}
This is easily verified electronically.
\end{proof}

\subsection*{Ethical approval}
Not applicable.

\subsection*{Competing interests} 
Not applicable.

\subsection*{Authors' contributions} 
Not applicable

\subsection*{Availability of data and materials}
Not applicable. 
\smallskip

\subsection*{Funding}
Not applicable.

\end{document}